\newtheorem{theorem}{Theorem}
\numberwithin{theorem}{section}
\newtheorem{corollary}[theorem]{Corollary}
\theoremstyle{definition}
\newtheorem{definition}[theorem]{Definition}
\theoremstyle{remark}
\newtheorem{question}[theorem]{Question}
\newtheorem{problem}[theorem]{Problem}
\numberwithin{equation}{section}
\newcommand{\reals}{\mathbb{R}}
\newcommand{\Heis}{\mathbb{H}}
\newcommand{\nats}{\mathbb{N}}
\newcommand{\comps}{\mathbb{C}}
\newcommand{\ovl}{\overline}
\newcommand{\subeq}{\subseteq}
\newcommand{\bslash}{\backslash}
\newcommand{\elinf}{l^{\infty}}
\newcommand{\grad}{\nabla}
\def\XXint#1#2#3{{\setbox0=\hbox{$#1{#2#3}{\int}$}
\vcenter{\hbox{$#2#3$}}\kern-.5\wd0}}
\def\ul#1{\underline{#1}} 
\author{Kevin Wildrick}
\address{K. Wildrick: Mathematisches Institut, Universit\"at Bern, Sidlerstrasse 5, 3012 Bern, Switzerland ({\tt kevin.wildrick@math.unibe.ch})}
\keywords{Poincar\'e-inequality space, Heisenberg group, Sobolev Space, measurable differentiable structure, non-embedding. 2010 \emph{Mathematics subject classification.} Primary: 46E35, 28A78; Secondary: 46E40, 53C17, 30L99}
\thanks{The author wishes to thank Paul Creutz and Nikita Evseev for informing him of their work, and is grateful to Zolt\'an Balogh and Thomas Z\"urcher for helpful discussions.}
\begin{document}

\title{Bochner Partial Derivatives, Cheeger-Kleiner Differentiability, and Non-Embedding}
\date{\today}
\begin{abstract}Among all Poincar\'e inequality spaces, we define the class of \emph{Cheeger fractals}, which includes the sub-Riemannian Heisenberg group. We show that there is no bi-Lipschitz embedding $\iota$ of any Cheeger fractal $X$ into any Banach space $V$ with the following property: there exists a  bounded Euclidean domain $\Omega$ such that for any Lipschitz mapping $f \colon \Omega \to X$, the Bochner partial derivatives of $\iota \circ f$ exist and are integrable. This extends and provides context for an important related result of Creutz and Evseev. 
\end{abstract}
\maketitle
\section{Introduction} 

Let $\Omega \subeq \reals^n$ be a bounded domain in Euclidean space, and let $(X,d)$ be a complete and separable metric measure space. Denote by $\elinf$ the Banach space of bounded sequences in $X$ equipped with the supremum norm. Recall that a \emph{Kuratowski embedding} is a mapping $\kappa \colon X \to \elinf$ of the form 
$$\kappa(x) = (d(x,x_i)-d(x_0,x_i))_{i \in \nats},$$
where $\{x_i\}_{i \in \nats}$ is a countable dense set in $X$. Each Kuratowski embedding is an isometric embedding \cite[Exercise 12.6]{heinonen_lectures_2001}. 

As evidenced by \cite{hajlasz_sobolev_2008}, 
 \cite{wildrick_peano_2009}, \cite{hajlasz_sobolev_2010}, \cite{balogh_weak_2014}, and several other works, a standard approach to defining partial derivatives of a mapping $f \colon \Omega \to X$ was as follows. One chooses a Kuratowski embedding $\kappa$ of $X$ into the Banach space $\elinf$, and considers the partial derivatives of $\kappa \circ f$ defined by integration by parts using a Bochner integral. Let us call these partial derivatives, should they exist as Bochner integrable mappings from $\Omega$ to $\elinf$,  the \emph{Bochner partial derivatives of $f$ with respect to $\kappa$}. 
 
 The flaw in this method was hinted at in \cite{arendt_mapping_2018} and \cite{caamano_sobolev_2021}, and made explicit by Creutz and Evseev \cite{creutz_approach_2021}. Namely, in order for the Bochner partial derivatives of $f$ with respect to $\kappa$ to exist, they must be measurable and essentially separably valued. Creutz and Evseev showed in \cite{creutz_approach_2021} that if $f$ is non-constant, then this is not the case. The key point in their proof is the appearance of ``distance-to-a-point'' functions in a Kuratowski embedding.

Creutz and Evseev propose a modification of this approach based on replacing the Bochner integral with the Gelfand integral \cite{creutz_approach_2021}, \cite{creutz_weak_2023}. 
This approach, which is likely to become the new standard technique, does not require the partial derivatives of $\kappa \circ f$ to be essentially separably valued, allowing for a better existence theory. In particular, the Gefland partial derivatives of $f$ with respect to $\kappa$ exist and are integrable whenever $f$ is Lipschitz. Moreover, the resulting Sobolev spaces coincide with  ``size of the gradient"-based approaches, namely the Newtonian and Reshetnyak spaces (see \cite[Theorem 1.4]{creutz_approach_2021} and the references therein as well as \cite{creutz_weak_2023}
). 

However, there is an unavoidable disadvantage of the approach of Creutz and Evseev. Let $V$ be a Banach space. A measurable mapping $g \colon \Omega \to V$ is essentially separably valued if and only if it is the point-wise limit of measurable simple mappings \cite[Chapter II]{diestel_vector_1977}. Hence, it follows from \cite{creutz_approach_2021} that the Gelfand partial derivatives of a non-constant $f$ with respect to a Kuratowski embedding cannot be approximated by measurable simple mappings. 

An alternate approach to this issue that would preserve the use of the Bochner integeral (and hence approximation by measurable simple mappings) is to seek a replacement for the Kuratowski embedding. Although the Kuratowski embeddings are isometric embeddings, for many purposes a bi-Lipschitz embedding would suffice: 

\begin{question}\label{Q1} Is there a bi-Lipschitz embedding $\iota \colon X \to V$ of $X$ into a Banach space $V$ so that Bochner partial derivatives with respect to $\iota$ of each Lipschitz mapping $f \colon \Omega \to X$ exist? 
\end{question} 

A trivial but important example is when  $X=[-1,1] \subeq \reals$ equipped with the Euclidean metric and Lebesgue measure, and $V$ is the Hilbert space $\reals$.  In this case the answer to Question \ref{Q1} is clearly ``Yes'': the identity mapping provides the desired embedding.  

The purpose of this work is to show that for many interesting metric measure spaces $X$, including the sub-Riemannian Heisenberg group, the answer to the above question is ``No''.  For such spaces, the issue identified by Creutz and Evseev is not a result of using a Kuratowski embedding, but rather a feature of the geometry of the metric measure space itself. This lends further support for the definition of Creutz and Evseev in the context of such spaces. 

In Section 2 we describe the class of spaces for which we negatively answer Question \ref{Q1}. In Sections 3 and 4 below we address Question \ref{Q1} by developing a connection to Cheeger-Kleiner differentiability theory and the corresponding non-embedding results. Section 5 contains some natural open questions arising from this line of inquiry.  

\section{Cheeger Fractals and non-embeddability}\label{sec fractals}
Roughly speaking, the metric measure spaces for which we will negatively answer Question~\ref{Q1} are Poincar\'e-inequality spaces (as defined in \cite{cheeger_differentiability_2009}) with the property that on a set of positive measure, the Hausdorff dimension of a Gromov-Hausdorff tangent space is strictly greater than the dimension of the measurable differentiable structure. We will call such a space a \emph{Cheeger fractal} - see Definition \ref{Cheeger fractal} below. 

\begin{definition}A metric measure space $(X,d,\mu)$ is a \emph{Poincar\'e-inequality space} if $(X,d)$ is complete, the collection of measurable sets for the measure $\mu$ includes the completion of the Borel $\sigma$-algebra, the measure $\mu$ is doubling, and $(X,d,\mu)$ supports a $p$-Poincar\'e inequality for some $p\geq 1$. 
\end{definition} 

The definition of a Poincar\'e-inequality space in \cite{cheeger_differentiability_2009} includes the requirement of quasiconvexity, which follows from the other assumptions (see \cite[Page 430]{cheeger_differentiability_1999} and \cite{korte_geometric_2007}).

The foundation of differentiation theory in metric measure spaces  is the notion of a measurable differentiable structure and the corresponding differentiability result from \cite{cheeger_differentiability_1999}.  
\begin{definition}\label{Cheeger def} [Cheeger] A \emph{measurable differentiable structure} on a metric measure space $(X,d,\mu)$ is a countable collection of \emph{charts} $\{\phi_\alpha \colon U_\alpha \to \reals^{N(\alpha)}\}_{\alpha}$, where 
\begin{enumerate} 
\item $\{U_\alpha\}_{\alpha}$ is a collection of measurable subsets of $X$ (of positive measure) covering $X$ up to a set of measure $0$,
\item $\sup_{\alpha} N(\alpha)<\infty$,
\item each $\phi_{\alpha}$ is a Lipschitz function,
\item for each Lipschitz function $f \colon X \to \reals$ and each $\alpha$,  there is a  Borel measurable function $\grad_\alpha f \colon U_\alpha \to \left(\reals^{N(\alpha)}\right)^*$ so that for $\mu$-almost every $\ul{x} \in U_{\alpha}$ 
\begin{equation}\label{Cheeger def eq}f(x)-f(\ul{x}) = \grad_{\alpha}f (\ul{x}) (\phi_\alpha(x) -\phi_{\alpha}(\ul{x})) + o(d(x,\ul{x}).\end{equation} 
Moreover, if $G  \colon U_\alpha \to \left(\reals^{N(\alpha)}\right)^*$ is another Borel measurable mapping that satisfies \eqref{Cheeger def eq} $\mu$-almost everywhere, then $G= \grad_{\alpha}f$ $\mu$-almost everywhere. 
\end{enumerate} 
\end{definition} 

\begin{theorem}[Cheeger]\label{Cheeger thm}  If $(X,d,\mu)$ is a Poincar\'e inequality space, then there exists a measurable differentiable structure on $X$. 
\end{theorem}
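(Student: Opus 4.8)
The plan is to construct the charts by selecting, on sets of density points, maximal families of Lipschitz functions whose infinitesimal behavior is linearly independent, with the Poincar\'e inequality supplying the quantitative control that makes the scheme work. The main analytic input is the theory of upper gradients. For a Lipschitz $f \colon X \to \reals$ I would first record the pointwise upper Lipschitz constant
\[ \Lip f(x) = \limsup_{r \to 0}\ \sup_{0<d(x,y)\le r} \frac{|f(y)-f(x)|}{r} \]
and its lower analogue $\operatorname{lip} f$, and then pass to the Cheeger--Sobolev (Newtonian) space, in which each such $f$ possesses a minimal $p$-weak upper gradient $g_f$. By standard upper-gradient theory $g_f \le \Lip f$ almost everywhere, so one inequality is free.

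The heart of the argument is the reverse pointwise comparison: there is a constant $C$, depending only on the doubling constant of $\mu$ and the Poincar\'e data, so that
\[ g_f(x) \le \Lip f(x) \le C\, g_f(x) \quad \text{for } \mu\text{-a.e.\ } x. \]
I would prove the nontrivial upper bound by applying the $p$-Poincar\'e inequality on a telescoping sequence of balls centered at a Lebesgue density point $\ul{x}$: the inequality controls the mean oscillation of $f$ on $B(\ul x,r)$ by $r$ times the average of $g_f$ over a fixed dilate of the ball, and a maximal-function estimate converts these averages into the pointwise bound $\Lip f(\ul x) \lesssim g_f(\ul x)$ at density points of $g_f$. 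This ``Lip--lip'' estimate is what forces the infinitesimal behavior of a Lipschitz function to be governed by a single scale, and it is the technical core on which everything else rests.

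With the comparison in hand I would build the charts by an exhaustion argument. On a measurable set $U$ of positive measure, call Lipschitz functions $\phi_1,\dots,\phi_N$ \emph{infinitesimally independent} at $\ul x \in U$ if no nontrivial combination $\sum_i a_i(\phi_i - \phi_i(\ul x))$ is $o(d(\cdot,\ul x))$ at $\ul x$; by the comparison this is equivalent to $g_{\sum_i a_i \phi_i}(\ul x) > 0$ whenever $(a_i)\ne 0$. Starting from a countable dense family of Lipschitz functions and greedily adjoining, on positive-measure subsets, functions that raise the number of independent directions, I would produce a countable collection of charts $\phi_\alpha = (\phi_{\alpha,1},\dots,\phi_{\alpha,N(\alpha)})$ on sets $U_\alpha$ covering $X$ up to measure zero and maximal in the sense that every Lipschitz $f$ is, at $\mu$-a.e.\ point of $U_\alpha$, infinitesimally a linear combination of the $\phi_{\alpha,i}$. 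Reading off the coefficients of this combination as a Borel function $\ul x \mapsto \grad_\alpha f(\ul x)$ yields \eqref{Cheeger def eq}, while infinitesimal independence of the chart forces the asserted uniqueness of $\grad_\alpha f$.

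The main obstacle is twofold. The first difficulty is the Lip--lip comparison itself, whose proof is delicate and uses the full strength of the Poincar\'e inequality together with a blow-up argument at density points. The second, and equally essential, is the uniform bound $\sup_\alpha N(\alpha) < \infty$: one must show that the number of infinitesimally independent directions cannot exceed a constant determined by the doubling dimension. I would obtain this by combining a reflexivity/compactness property of the Cheeger--Sobolev space with the observation that a chart $\phi_\alpha \colon U_\alpha \to \reals^{N(\alpha)}$ is, after blow-up at a density point, bi-Lipschitz onto its image, so the blown-up measure pushes forward to a doubling measure on $\reals^{N(\alpha)}$; since the doubling constant of $\mu$ is fixed while that of $N$-dimensional models grows like $2^{N}$, this caps $N(\alpha)$ uniformly. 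Controlling this dimension across all charts simultaneously is the step I expect to require the most care.
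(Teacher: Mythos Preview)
The paper does not prove this theorem at all: it is stated as a result of Cheeger and attributed to \cite{cheeger_differentiability_1999}, with no argument given. So there is no ``paper's own proof'' to compare your proposal against.

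That said, your outline is a reasonable high-level sketch of Cheeger's original strategy --- the Lip--lip comparison via the Poincar\'e inequality, the greedy/exhaustion construction of maximal infinitesimally independent families, and the doubling-based dimension bound are indeed the main ingredients. For the purposes of this paper, however, a citation is all that is expected; reproducing even a sketch of Cheeger's proof would be out of place here.
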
 

\begin{definition}\label{Cheeger fractal} A Poincar\'e-inequality space $(X,d,\mu)$ is a \emph{Cheeger fractal} if there is a measurable differentiable structure $\{\phi_\alpha \colon U_\alpha \to \reals^{N(\alpha)}\}_{\alpha}$ on $X$ with the following property: there is a subset $E$ of positive measure contained in the domain $U_{\alpha}$ of a chart so that at each point $\ul{x} \in E$ there is a Gromov-Hausdorff tangent space $X_{\ul{x}}$ having Hausdorff dimension strictly greater than $N(\alpha)$. 
\end{definition} 

We say that a function $f\colon X \to \reals$ is \emph{(Cheeger) differentiable} at $\ul{x} \in X$ if \eqref{Cheeger def eq} holds. 
This notion of differentiability can be extended to Banach space-valued mapping $\iota \colon X \to V$. In this case, the gradient $\grad_\alpha \iota$ takes values in $\left(\reals^{N(\alpha)}\right)^* \otimes V$. The following application to the non-existence of bi-Lipschitz embeddings was proven in \cite{cheeger_differentiability_1999}, see also \cite{wildrick_sharp_2015}:
  
\begin{theorem}[Cheeger]\label{no diff intro} No bi-Lipschitz embedding of a Cheeger fractal $X$ into any Banach space is Cheeger differentiable almost everywhere with respect to any measurable differentiable structure on $X$. 
\end{theorem}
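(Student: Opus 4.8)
The plan is to argue by contradiction, following the blow-up strategy underlying Cheeger's differentiation theorem. Suppose that $\iota \colon X \to V$ is a bi-Lipschitz embedding into a Banach space $V$ that is Cheeger differentiable $\mu$-almost everywhere with respect to some measurable differentiable structure $\{\phi_\alpha \colon U_\alpha \to \reals^{N(\alpha)}\}_\alpha$. By Definition~\ref{Cheeger fractal}, there is a chart $\phi_\alpha \colon U_\alpha \to \reals^{N(\alpha)}$ and a set $E \subeq U_\alpha$ of positive measure such that every $\ul{x} \in E$ admits a Gromov--Hausdorff tangent space $X_{\ul x}$ with $\dim_{\mathcal H} X_{\ul x} > N(\alpha)$. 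Since differentiability holds almost everywhere and $\mu(E) > 0$, I would fix a point $\ul{x} \in E$ at which $\iota$ is differentiable, and write $L := \grad_\alpha \iota(\ul x) \in \left(\reals^{N(\alpha)}\right)^* \otimes V$ for its differential, viewed as a bounded linear map $L \colon \reals^{N(\alpha)} \to V$.

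The heart of the argument is to blow up at $\ul x$ along the sequence of scales $r_i \to 0$ realizing the tangent $X_{\ul x}$. The map $\iota$ is bi-Lipschitz, so the rescaled maps $\tfrac{1}{r_i}(\iota(\cdot) - \iota(\ul x))$, which all take values in the linear space $V$, converge along the tangent sequence to a bi-Lipschitz embedding $\hat\iota$ of $X_{\ul x}$ into an ultralimit $V_\omega$ of $V$; bi-Lipschitz constants are preserved in the limit. Similarly, the chart $\phi_\alpha$ is Lipschitz, hence differentiable by Theorem~\ref{Cheeger thm}, so the rescaled maps $\tfrac{1}{r_i}(\phi_\alpha(\cdot) - \phi_\alpha(\ul x))$ converge to a Lipschitz ``generalized linear'' map $d\phi_\alpha \colon X_{\ul x} \to \reals^{N(\alpha)}$. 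The differentiability relation \eqref{Cheeger def eq} for $\iota$, namely $\iota(x) - \iota(\ul x) = L(\phi_\alpha(x) - \phi_\alpha(\ul x)) + o(d(x,\ul x))$, survives rescaling: dividing by $r_i$ kills the error term uniformly on each bounded ball, yielding the factorization $\hat\iota = L \circ d\phi_\alpha$ on $X_{\ul x}$, where $L$ is regarded as a map into $V \subeq V_\omega$.

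The conclusion is then elementary. Since $\hat\iota = L \circ d\phi_\alpha$ is bi-Lipschitz and $L$ is bounded linear, the lower Lipschitz bound for $\hat\iota$ forces $d\phi_\alpha$ to be bi-Lipschitz onto its image: indeed $\|d\phi_\alpha(p) - d\phi_\alpha(q)\|_{\reals^{N(\alpha)}} \geq \|L\|^{-1}\|\hat\iota(p) - \hat\iota(q)\|_{V_\omega} \gtrsim d(p,q)$, while the upper bound is immediate from the Lipschitz constant of $\phi_\alpha$. Thus $d\phi_\alpha$ is a bi-Lipschitz embedding of $X_{\ul x}$ into $\reals^{N(\alpha)}$, forcing $\dim_{\mathcal H} X_{\ul x} \leq N(\alpha)$ and contradicting the choice of $\ul x \in E$.

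The main technical obstacle I anticipate is making the simultaneous blow-up rigorous. One must set up the Gromov--Hausdorff (or ultralimit) convergence so that $\iota$ and $\phi_\alpha$ are blown up along the \emph{same} scale sequence that produces the fractal tangent $X_{\ul x}$, and verify that the pointwise relation \eqref{Cheeger def eq}, whose error is controlled only as $d(x,\ul x) \to 0$, passes to the limiting factorization on each bounded ball of $X_{\ul x}$. Care is also needed to ensure that differentiability of $\iota$ (available for $\mu$-almost every point) can be arranged at a point of $E$ where the prescribed tangent of large Hausdorff dimension exists; this is where the positivity of $\mu(E)$ combines with the almost-everywhere nature of Cheeger differentiability.
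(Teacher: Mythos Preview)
The paper does not give its own proof of this theorem; it is quoted as a result of Cheeger \cite{cheeger_differentiability_1999} (with a reference also to \cite{wildrick_sharp_2015}). Your blow-up argument is essentially Cheeger's original strategy and is correct: differentiability at $\ul x$ forces the blown-up embedding $\hat\iota$ to factor through a linear map $L\colon \reals^{N(\alpha)}\to V$, and the bi-Lipschitz bound on $\hat\iota$ then pushes back to make $d\phi_\alpha\colon X_{\ul x}\to\reals^{N(\alpha)}$ bi-Lipschitz, contradicting the dimension hypothesis.

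Two small comments on the write-up. First, invoking Theorem~\ref{Cheeger thm} for the convergence of the rescaled charts is unnecessary (and slightly misdirected, since $\phi_\alpha$'s Cheeger differential with respect to itself is just the identity): the rescalings $\tfrac{1}{r_i}(\phi_\alpha-\phi_\alpha(\ul x))$ are uniformly Lipschitz into $\reals^{N(\alpha)}$, so subconverge by Arzel\`a--Ascoli along any subsequence of the tangent scales, which still yields the same tangent $X_{\ul x}$. Second, you can avoid the ultralimit $V_\omega$ altogether: once you know $\tfrac{1}{r_i}(\iota-\iota(\ul x)) = L\bigl(\tfrac{1}{r_i}(\phi_\alpha-\phi_\alpha(\ul x))\bigr)+o(1)$ and the rescaled $\phi_\alpha$ converge in $\reals^{N(\alpha)}$, the rescaled $\iota$ converge in $V$ itself, with limit in the finite-dimensional range of $L$. (The degenerate case $L=0$ is implicitly handled, since then $\hat\iota\equiv 0$ cannot be bi-Lipschitz on a tangent of positive Hausdorff dimension.)
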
 

A highlight in the development of the theory is the following extension, due to Cheeger and Kleiner \cite{cheeger_differentiability_2009}, of Theorem \ref{Cheeger thm} to include Banach-spaced valued mappings, provided that the target $V$ has the \emph{Radon-Nikodym property}: any Lipschitz path $c \colon [0,1] \to V$ is Fr\'echet differentiable almost everywhere.
 
\begin{theorem}[Cheeger-Kleiner]\label{Cheeger-Kleiner} If $X$ is a Poincar\'e-inequality space and $V$ is a Banach space with the Radon-Nikodym property, then any Lipschitz function $\iota \colon X \to V$ is differentiable almost everywhere with respect to any measurable differentiable structure on $X$.
\end{theorem}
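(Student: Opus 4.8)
The plan is to upgrade the scalar differentiation provided by the measurable differentiable structure to a norm-differentiation statement for the $V$-valued map $\iota$, invoking the Radon-Nikodym property exactly at the point where the possible non-separability of $V^*$ obstructs a naive argument. Fix a chart $\phi_\alpha \colon U_\alpha \to \reals^{N(\alpha)}$ and write $N = N(\alpha)$. I must produce, for $\mu$-almost every $\ul{x} \in U_\alpha$, a linear map $L(\ul{x}) \in \left(\reals^{N}\right)^* \otimes V \cong \Hom(\reals^{N}, V)$ with
$$\lim_{x \to \ul{x}} \frac{\|\iota(x) - \iota(\ul{x}) - L(\ul{x})(\phi_\alpha(x) - \phi_\alpha(\ul{x}))\|_V}{d(x, \ul{x})} = 0.$$

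First I would record the candidate derivative coming from duality. For each functional $\lambda \in V^*$, the composition $\lambda \circ \iota \colon X \to \reals$ is Lipschitz, so by Definition \ref{Cheeger def} it is Cheeger differentiable $\mu$-almost everywhere with gradient $\grad_\alpha(\lambda \circ \iota)(\ul{x}) \in (\reals^{N})^*$. The desired map must satisfy $\lambda \circ L(\ul{x}) = \grad_\alpha(\lambda \circ \iota)(\ul{x})$ for every $\lambda$; equivalently, writing $L(\ul{x}) = \sum_{i=1}^{N} e_i^* \otimes v_i(\ul{x})$, the vectors $v_i(\ul{x}) \in V$ are the ``partial derivatives'' of $\iota$ in the chart coordinates. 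The obstruction is immediate: the full-measure set on which $\lambda \circ \iota$ is differentiable depends on $\lambda$, and since $V^*$ need not be separable, one cannot intersect over all $\lambda$ to obtain a single $\mu$-null exceptional set, nor is it clear a priori that a weak directional derivative is represented by an actual vector in $V$ or that the first-order expansion holds in norm.

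The Radon-Nikodym property is precisely what removes this obstruction, and I would exploit it along curves. Because $X$ is a Poincar\'e-inequality space it is quasiconvex, so points are joined by a rich family of rectifiable curves; restricting $\iota$ to such a curve $c \colon [0,1] \to X$ gives a Lipschitz path $\iota \circ c \colon [0,1] \to V$, which by the Radon-Nikodym property is Fr\'echet differentiable almost everywhere and obeys the vector-valued fundamental theorem of calculus. This produces honest $V$-valued directional derivatives rather than merely weak ones. The core of the argument is then a blow-up analysis at a generic point $\ul{x}$: passing to rescalings of $\iota$ and of $\phi_\alpha$ and extracting, via the doubling property and Arzel\`a--Ascoli, a tangent map $\iota_\infty$ on a Gromov--Hausdorff tangent $X_{\ul{x}}$, one shows using the curve-wise differentiability above together with the dilation structure of the tangent cone that $\iota_\infty$ is a \emph{generalized linear} map into $V$. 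Since the components of $\phi_\alpha$ blow up to generalized linear functions spanning the finite-dimensional space of such functions on $X_{\ul{x}}$, the map $\iota_\infty$ factors as $L(\ul{x}) \circ (\phi_\alpha)_\infty$ for a unique linear $L(\ul{x}) \colon \reals^{N} \to V$.

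The final step is to convert this blow-up statement into genuine differentiability, i.e. to show the tangent map $L(\ul{x})$ is independent of the rescaling sequence for $\mu$-almost every $\ul{x}$, which promotes the convergence of difference quotients to the norm-estimate displayed above. I would carry this out by a measure-differentiation (Vitali-type) argument establishing that the set of points admitting two distinct tangent maps, or an anomalous scale, is $\mu$-null. The main obstacle throughout is the single step where the Radon-Nikodym property is indispensable: guaranteeing that the blow-up $\iota_\infty$ is affine along the geodesics of $X_{\ul{x}}$, and hence generalized linear, rather than merely weakly so. That this hypothesis cannot be dropped is visible from the results already in hand: by Theorem \ref{no diff intro} the isometric Kuratowski embedding of any Cheeger fractal into $\elinf$ is bi-Lipschitz yet fails to be differentiable almost everywhere, and $\elinf$ lacks the Radon-Nikodym property.
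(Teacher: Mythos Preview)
The paper does not supply a proof of this theorem. It is stated as a result of Cheeger and Kleiner with a citation to \cite{cheeger_differentiability_2009}, and the only further comment the paper makes is that ``a careful analysis of \cite{cheeger_differentiability_2009} shows that the proof of Theorem \ref{Cheeger-Kleiner} in fact also proves Theorem \ref{Variant}.'' There is therefore nothing in the present paper against which to compare your argument.

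For what it is worth, your sketch does track the broad architecture of the original Cheeger--Kleiner proof: blow up at a generic point, use the Radon--Nikodym property along Lipschitz curves to force the tangent map to be generalized linear (i.e.\ to factor through the finite-dimensional space of blow-ups of the chart coordinates), and then argue uniqueness of the tangent to upgrade to genuine differentiability. The places where your outline is thinnest---the precise compactness/convergence statement for the rescalings of a $V$-valued map, and the ``Vitali-type'' step ruling out multiple tangent maps---are exactly where the real work in \cite{cheeger_differentiability_2009} lies, so as written this is a plausible roadmap rather than a proof.
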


The key tool in this work is the following extension of Theorem \ref{Cheeger-Kleiner}. 

\begin{definition}\label{iRNP} Given a mapping $\iota \colon X \to V$, we say that $V$ has the \emph{$\iota$-Radon-Nikodym property} if for any Lipschitz path $c \colon [0,1] \to X$, the composition $\iota \circ c \colon [0,1] \to V$ is differentiable almost everywhere. 
\end{definition} 

\begin{theorem}\label{Variant} Let $X$ be a Poincar\'e-inequality space, let $V$ be a Banach space, and let $\iota\colon X \to V$ be a Lipschitz mapping. If $V$ has the $\iota$-Radon-Nikodym property, then $\iota$ is differentiable almost everywhere with respect to any measurable differentiable structure on $X$. 
\end{theorem}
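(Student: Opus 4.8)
The plan is to revisit the proof of the Cheeger--Kleiner theorem (Theorem~\ref{Cheeger-Kleiner}) and to isolate precisely where the Radon--Nikodym property of $V$ is used. My expectation is that it enters only through the following consequence: for every Lipschitz path $c \co [0,1] \to X$ the composition $\iota \circ c \co [0,1] \to V$ is a Lipschitz path, and the full RNP guarantees that it is differentiable almost everywhere. Since Definition~\ref{iRNP} asserts exactly this differentiability for the mapping $\iota$ at hand, the $\iota$-Radon--Nikodym property should serve as a drop-in replacement for the full RNP, and the conclusion of Theorem~\ref{Cheeger-Kleiner} should persist verbatim.

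To make this precise, first I would fix a measurable differentiable structure $\{\phi_\alpha \co U_\alpha \to \reals^{N(\alpha)}\}_\alpha$ and a chart $\phi_\alpha$, and reduce to the scalar theory along the usual route: for each $v^* \in V^*$ the function $v^* \circ \iota \co X \to \reals$ is Lipschitz, hence Cheeger-differentiable almost everywhere by Theorem~\ref{Cheeger thm}, with gradient $\grad_\alpha(v^* \circ \iota)$. Collecting these scalar gradients produces a candidate derivative $\ul{x} \mapsto D\iota(\ul{x})$, determined by $v^* \mapsto \grad_\alpha(v^* \circ \iota)(\ul{x})$ and a priori taking values in $(\reals^{N(\alpha)})^* \otimes V^{**}$. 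The real work, exactly as in Cheeger--Kleiner, is to promote this \emph{weak} derivative to an honest norm differential satisfying \eqref{Cheeger def eq}: one must show the candidate takes values in $V$ rather than merely $V^{**}$, and that the $o(d(x,\ul{x}))$ error holds in the norm of $V$.

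The upgrade from weak to strong differentiability is where curve differentiability is indispensable. Here I would use that a Poincar\'e-inequality space carries an abundance of rectifiable curves --- quantitatively, the measure $\mu$ admits Alberti representations whose curve families detect the chart gradient --- so that the variation of $\iota$ is controlled by its behaviour along Lipschitz curves $c$ in $X$. Along $\mu$-almost every such curve the hypothesis supplies a genuine derivative $(\iota \circ c)'(t)$, and the chain-rule-type identity relating $(\iota \circ c)'(t)$ to $D\iota(c(t))$ evaluated on $(\phi_\alpha \circ c)'(t)$, integrated over the representation, should force the weak derivative to be a strong one and to land in $V$. Crucially, every curve appearing in this step lies in $X$, so the only input demanded of the target is the differentiability of the compositions $\iota \circ c$, which is precisely what Definition~\ref{iRNP} supplies.

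The main obstacle I anticipate is bookkeeping rather than a new idea: I must verify that \emph{every} appeal to the Radon--Nikodym property in the Cheeger--Kleiner argument --- including any passage to Gromov--Hausdorff tangents of $X$ and the associated tangent maps of $\iota$ --- factors through differentiability of $\iota$ along curves in $X$, with no residual use of the full RNP on $V$ or on limiting maps. I expect this to hold because tangent maps are constructed as limits of rescalings of $\iota$, so their behaviour along curves is inherited from that of $\iota$ along curves in $X$; nonetheless, confirming that the curve-differentiability hypothesis is stable under these blow-up limits, and that no auxiliary Lipschitz paths into $V$ lying outside the relevant image are ever differentiated, is the step that will demand the most care.
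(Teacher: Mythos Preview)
Your proposal is correct and matches the paper's approach exactly: the paper does not give an independent proof but simply asserts that a careful reading of the Cheeger--Kleiner argument in \cite{cheeger_differentiability_2009} shows that the Radon--Nikodym property of $V$ is invoked only through the almost-everywhere differentiability of $\iota \circ c$ for Lipschitz curves $c$ in $X$, which is precisely the content of the $\iota$-Radon--Nikodym property. Your sketch supplies more detail than the paper itself, and the caution you flag about blow-ups is reasonable bookkeeping but not an obstruction.
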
 

A careful analysis of \cite{cheeger_differentiability_2009} shows that the proof of Theorem \ref{Cheeger-Kleiner} in fact also proves Theorem \ref{Variant}. The following non-embedding result now follows immediately from Theorem \ref{no diff intro} and Theorem \ref{Variant}: 
\begin{corollary}\label{non-embedding}
 Let $X$ be a Cheeger fractal and let $V$ be any Banach space. There is no bi-Lipschitz embedding $\iota \colon X \to V$ such that $V$ has the $\iota$-Radon-Nikodym Property. 
\end{corollary}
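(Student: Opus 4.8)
The plan is to derive Corollary~\ref{non-embedding} as a direct
logical consequence of the two results immediately preceding it,
namely Theorem~\ref{no diff intro} and Theorem~\ref{Variant}. I would
argue by contradiction. Suppose, toward a contradiction, that $X$ is a
Cheeger fractal and that there exists a bi-Lipschitz embedding $\iota
\colon X \to V$ into some Banach space $V$ with the property that $V$
has the $\iota$-Radon-Nikodym property. The goal is to show that these
hypotheses are mutually incompatible.

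The first step is to note that a Cheeger fractal is in particular a
Poincar\'e-inequality space, so all the hypotheses of
Theorem~\ref{Variant} concerning $X$ are met. Since $\iota$ is
bi-Lipschitz it is in particular Lipschitz, which supplies the
remaining hypothesis on the mapping. By assumption $V$ has the
$\iota$-Radon-Nikodym property, so Theorem~\ref{Variant} applies and
yields that $\iota$ is differentiable $\mu$-almost everywhere with
respect to any measurable differentiable structure on $X$. The second
step is to invoke Theorem~\ref{no diff intro}: because $X$ is a Cheeger
fractal, no bi-Lipschitz embedding of $X$ into any Banach space can be
Cheeger differentiable almost everywhere with respect to any
measurable differentiable structure. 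These two conclusions directly
contradict each other, since $\iota$ is a bi-Lipschitz embedding that
is simultaneously forced to be, and forced not to be, differentiable
almost everywhere. This contradiction shows that no such $\iota$ can
exist, which is precisely the assertion of the corollary.

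I do not anticipate any genuine obstacle here, as the corollary is
designed to be an immediate formal consequence of the two theorems; the
only point deserving a moment's care is to confirm that the two
statements quantify over the same objects so that they can be combined.
Specifically, Theorem~\ref{Variant} guarantees differentiability with
respect to \emph{any} measurable differentiable structure, while
Theorem~\ref{no diff intro} denies differentiability with respect to
\emph{any} measurable differentiable structure; since both range over
the same (nonempty, by Theorem~\ref{Cheeger thm}) collection of
structures, applying either to a common fixed structure produces the
desired clash. Thus the entire content of the corollary is the
observation that the hypothesis ``$V$ has the $\iota$-Radon-Nikodym
property'' is exactly the ingredient needed to upgrade the generic
non-differentiability obstruction of Theorem~\ref{no diff intro} into
an obstruction to bi-Lipschitz embeddability.
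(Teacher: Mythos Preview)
Your proposal is correct and matches the paper's approach exactly: the paper states that the corollary ``follows immediately from Theorem~\ref{no diff intro} and Theorem~\ref{Variant}'' without giving any further details, and your contradiction argument is precisely the intended immediate deduction.
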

 
 \section{The main result} 
 
 \begin{theorem}\label{main} If $(X,d,\mu)$ is a Cheeger fractal, then the answer to Question \ref{Q1} is ``No''. 
 \end{theorem}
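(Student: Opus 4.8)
The plan is to show that Question \ref{Q1} must be answered ``No'' for any Cheeger fractal $X$ by reducing the existence of Bochner partial derivatives to the $\iota$-Radon-Nikodym property, and then invoking Corollary \ref{non-embedding}. Concretely, suppose toward a contradiction that there existed a bi-Lipschitz embedding $\iota \colon X \to V$ such that the Bochner partial derivatives of $\iota \circ f$ exist and are Bochner integrable for every Lipschitz $f \colon \Omega \to X$, where $\Omega \subeq \reals^n$ is a fixed bounded domain. The goal is to manufacture from this hypothesis enough one-dimensional differentiability of $\iota$ along Lipschitz curves in $X$ to conclude that $V$ has the $\iota$-Radon-Nikodym property, which is exactly the hypothesis forbidden by Corollary \ref{non-embedding}.

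The key structural point is that Bochner integrability forces essential separable-valuedness: a Bochner integrable mapping $\Omega \to V$ takes values, up to a null set, in a separable subspace of $V$, and its distributional partial derivatives being Bochner integrable means $\iota \circ f$ lies in a genuine (Bochner) Sobolev space $W^{1,1}(\Omega, V_0)$ for some separable $V_0 \subeq V$. I would first reduce to the case $n=1$ by restricting attention to mappings $f$ that depend on a single coordinate; more precisely, given a Lipschitz path $c \colon [0,1] \to X$, I would extend it to a Lipschitz map $f \colon \Omega \to X$ that, on a positive-measure family of parallel line segments, agrees with (reparametrizations of) $c$. The standard Fubini-type characterization of Sobolev functions then applies: if $\iota \circ f \in W^{1,1}(\Omega, V_0)$, then for almost every line $\ell$ parallel to the chosen coordinate axis, the restriction $(\iota \circ f)|_\ell$ is absolutely continuous as a $V_0$-valued function, hence differentiable almost everywhere since $V_0$ is separable and Bochner Sobolev functions on an interval have the Radon-Nikodym-type representation. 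This yields almost-everywhere differentiability of $\iota \circ c$ for a positive-measure set of the relevant lines, and a covering/density argument over the choice of direction and base path upgrades this to: $\iota \circ c$ is differentiable almost everywhere for \emph{every} Lipschitz $c \colon [0,1] \to X$. That is precisely the $\iota$-Radon-Nikodym property of Definition \ref{iRNP}.

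With $V$ shown to have the $\iota$-Radon-Nikodym property and $\iota$ a bi-Lipschitz embedding of the Cheeger fractal $X$, Corollary \ref{non-embedding} delivers the contradiction, completing the proof. The main obstacle I anticipate is the passage from ``Bochner partial derivatives of $\iota \circ f$ exist for every Lipschitz $f$'' to ``$\iota \circ c$ is differentiable a.e. for every Lipschitz path $c$''. The subtlety is that the hypothesis is phrased in terms of $n$-dimensional mappings on $\Omega$, whereas the $\iota$-Radon-Nikodym property concerns one-dimensional paths, so one must carefully engineer the family of extensions $f$ and invoke the correct Fubini slicing theorem for vector-valued Sobolev spaces to ensure that almost-every slice inherits absolute continuity with values in a separable space. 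One must also verify that the existence of Bochner integrable partial derivatives genuinely places $\iota \circ f$ in the vector-valued Sobolev space (rather than merely asserting weak derivatives in some distributional sense), so that the separable-valuedness and the slicing machinery apply; this is where the essential separable-valuedness consequence of Bochner integrability, emphasized in the introduction, does the decisive work.
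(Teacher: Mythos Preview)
Your approach is essentially the same as the paper's: extend an arbitrary Lipschitz path $c$ to a Lipschitz map $f_c \colon \Omega \to X$ depending only on one coordinate, use that the existence of Bochner partial derivatives forces differentiability almost everywhere on almost every coordinate line (the paper cites \cite{kreuter_sobolev_2015} for this, whereas you sketch it via essential separable-valuedness and the Lebesgue differentiation theorem for Bochner integrals), deduce the $\iota$-Radon-Nikodym property, and contradict Corollary~\ref{non-embedding}. One simplification worth noting: since $f_c$ is constant in the transverse directions, every line parallel to the distinguished axis carries the \emph{same} restriction $\iota \circ c$, so the ``covering/density argument over the choice of direction and base path'' you anticipate is unnecessary---differentiability on a single such line already gives the conclusion for $c$.
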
 
 
 \begin{proof}
We assume without loss of generality that $[0,1]^n$ is contained in $\Omega$. Let  $c \colon [0,1] \to X$ be a Lipschitz path in a Cheeger fractal $X$, and let $\iota \colon X \to V$ be a bi-Lipschitz embedding of $X$ into an arbitrary Banach space $V$. We define a Lipschitz mapping $f_c \colon \Omega \to X$ by 
$$f_c(x_1,\hdots,x_n) = \begin{cases} 
				c(x_1) & 0 \leq x_1 \leq 1, \\
				c(0) & x_1 < 0,\\
				c(1) & x_1 >1.\\
				\end{cases}
				$$
Note that if $\iota \circ f_c$ is differentiable almost everywhere on any translate $[0,1] + (0,x_2,\hdots,x_n)$ where $(x_2,\hdots,x_n) \in [0,1]^{n-1}$, then $\iota \circ c$ is differentiable almost everywhere.

It was stated in \cite{arendt_mapping_2018} and proven in \cite[Theorem 4.16]{kreuter_sobolev_2015} that if the Bochner partial derivatives of a mapping $F \colon \Omega \to V$ exist, then $F$ is absolutely continuous \emph{and differentiable almost everywhere} on almost any line in the usual sense.  

Hence, if the Bochner partial derivatives of $\iota \circ f_c$ for any Lipschitz path $c\colon [0,1] \to X$, then $V$ has the $\iota$-Radon-Nikodym property, contradicting Corollary \ref{non-embedding}.  
\end{proof}
 
\section{Sharpness of the main result}
In Theorem \ref{main}, we have shown that for every bi-Lipschitz embedding $\iota \colon X \to V$ of a Cheeger fractal $X$ into a Banach space $V$, there must be at least one Lipschitz function $f \colon \Omega \to X$ so that the Bochner partial derivatives of $f$ with respect to $\iota$ do not exist. 
This is a weaker statement than the corresponding result of Creutz and Evseev regarding any Kuratowski embedding $\kappa$: 
for \textit{every} non-constant function $f\colon \Omega \to X$, the Bochner partial derivatives of $f$ with respect to $\kappa$ do not exist. Hence it is reasonable to ask:

\begin{question}\label{Q2}
 Let $(X,d,\mu)$ be a Cheeger fractal. Is there a bi-Lipschitz embedding $\iota \colon X \to V$ so that there is at least one non-constant function $f \colon \Omega \to X$ so that the Bochner partial derivatives of $f$ with respect to $\iota$ \emph{do} exist?
\end{question}

In this section, we construct a Cheeger fractal for which the answer to Question \ref{Q2} is ``Yes''. 

The Heisenberg group is a non-commutative Lie-group structure on $\comps \times \reals$ given by the group operation
$$(z,t) \star (z',t') = (z+z',t+t'-2\text{Im}(z\ovl{z'})).$$

For an introduction to the Heisenberg group as a metric measure space, see \cite{capogna_introduction_2007}. We denote by $\Heis$ first Heisenberg group equipped with the Carnot-Carath\'eodory metric $d$ and 3-dimensional Lebesgue measure $\mathcal{L}^3$.  It has been shown that $\Heis$ is a Poincar\'e inequality space (see \cite[Chapter 11.3]{hajlasz_sobolev_1995} and the references therein). Moreover, it follows from Pansu's differentiability theorem \cite{pansu_metriques_1989} that $\Heis$ is a Cheeger fractal: $\Heis$ has a measurable differentiable structure with a single chart $\pi\colon \Heis \to \comps$ given by $\pi(z,t)=z$. This means the differentiable structure is of dimension $2$, while each Gromov-Hausdorff tangent space is Ahlfors-regular of dimension $4$; see \cite{kleiner_differentiable_2016} for more information.

Let $A$ denote $\reals$ equipped with the standard metric $d_\reals$ and Lebesgue measure $\mathcal{L}$. It is well-known that $A$ is a Poincar\'e-inequality space. 

We define the metric measure space $X$ to be the ``supremum'' gluing of $A$ and $\Heis$ along the single point $0 \in \reals \sim (0,0) \in \Heis$, which we denote henceforth by $O$. The construction of $X$ is elementary, and results in the following properties:
\begin{itemize}
\item the restriction of the metric on $X$ to $A \times A$ is $d_\reals$, and the restriction of the metric on $X$ to $\Heis \times \Heis$ is $d$,
\item the distance in $X$ between a point $a \in A \subeq X$ and a point $b \in \Heis \subeq X$ is
$$\max(d_\reals(a,O),d(b,O)),$$ 
\item The measure of a subset $S \subeq X$ is given by
$$\mathcal{L}(S \cap A) + \mathcal{L}^3(S \cap \Heis).$$
\end{itemize}

The techniques of \cite[Chapter 6.14]{heinonen_quasiconformal_1998}  show that $X$ is a Poincar\'e-inequality space. It supports a measurable differentiable structure consisting of two disjoint charts: the identity on $\reals \bslash \{O\}$ and the projection $\pi(z,t)= z$ on $\Heis \bslash \{O\}$.  Since $X$ contains a copy of $\Heis$ as a positive measure subset, it is also a Cheeger fractal.

We now construct an embedding $\iota \colon X \to \elinf$. Let $\kappa$ be a Kuratowski embedding of $\Heis$ into $\elinf$ post-composed with a translation chosen that $\kappa$ maps the origin to the zero sequence. Denote by $\tau \colon \elinf \to \elinf$ the shift operator 
$$\tau(\sigma_1,\sigma_2,\hdots) = (0,\sigma_1,\sigma_2,\hdots).$$

For $x \in X$, define 
$$\iota(x) = \begin{cases} 
			\tau \circ \kappa(x) & x \in \Heis, \\
			(x,0,0,0,\hdots) & x \in \reals.\\ \end{cases}$$
Then $\iota$ is an isometric embedding. 

Let $\Omega=(-1,1) \subeq \reals$. Any Lipschitz mapping $f \colon \Omega \to \reals$ also defines a Lipschitz mapping $F \colon \Omega \to X$, and the Bochner partial derivatives of $F$ with respect to $\iota$ are just the standard partial derivatives of $f$, which exist almost everywhere by the classical Rademacher theorem. 

\section{Directions for further research}

\begin{problem}\label{open 1} Among all Poincar\'e inequality spaces, characterize those for which the answer to Question \ref{Q1} is ``No''. 
\end{problem} 

I expect that the answer to Question \ref{open 1} is rather complicated, as there exist rather pathological Poincar\'e inequality spaces that are not Cheeger fractals (e.g. \cite{laakso_plane_2002}). Also see \cite[Remark 4.64]{cheeger_differentiability_1999}.

In a similar vein:
\begin{problem}\label{open 2} Characterize the metric spaces $(X,d)$ for which there exists a Banach space $V$ and a bi-Lipschitz embedding $\iota \colon X \to V$ so that $V$ has the $\iota$-Radon-Nikodym property. Is there such a metric space that does not bi-Lipschitzly embed into a Banach space that has the Radon-Nikodym property? 
\end{problem}

As mentioned above, the sub-Riemannian Heisenberg group $\Heis$ is a Cheeger fractal with a particularly simple measurable differentiable structure. However, the answer to Question \ref{Q2} in the case of $\Heis$ is still unknown: 

\begin{problem}\label{open 3} Is there \emph{any} non-constant mapping $f \colon \Omega \to \Heis$ and any bi-Lipschitz embedding $\iota \colon \Heis \to V$ into a Banach space so that the Bochner partial deriviatives of $f$ with respect to $\iota$ exist? 
\end{problem} 

We now consider a closely related issue. Consider a Lipschitz path $c \colon [-1,1] \to \Heis$. For each point $t \in [-1,1]$ where $\pi\circ c$ is differentiable, there exists (suitably defined)  element $c'(t)$ of the measurable tangent space to $\Heis$ at $c(t)$, called a \emph{velocity} (see \cite{cheeger_differentiability_2009}). Denote by $\mathcal{V}_p$  the set of velocities in the measurable tangent space to $\Heis$ a point $p \in \Heis$, and consider a bi-Lipschitz embedding $\iota \colon \Heis \to V$. Consider the subset 
$$\mathcal{V}_{\iota,p} =\{v \in \mathcal{V}_p: v=c'(0) \text{ and $\iota \circ c$ is Fréchet differentiable at $0$}\}$$
The methods of \cite{cheeger_differentiability_2009} and the discussion in Section \ref{sec fractals} show that $\dim \mathcal{V}_{\iota,p} \leq 1$ for almost every $p \in \Heis$.  However, it is unclear if $\mathcal{V}_{\iota,p}$ can ever be non-trivial:

\begin{problem}\label{open 4} Is there a bi-Lipschitz embedding $\iota \colon \Heis \to V$ into a Banach space so that $\mathcal{V}_{\iota,p}$ contains a non-zero velocity for some $p \in \Heis$? 
\end{problem}

A simpler version of Problem \ref{open 4}, which can be posed without referring to the tools developed in \cite{cheeger_differentiability_2009}, is as follows:
\begin{problem}\label{open 5} Is there a bi-Lipschitz embedding $\iota \colon \Heis \to V$ of the Heisenberg group into a Banach space $V$ so that there is a single vector $\vec{v} \in V$ satisfying 
$$\iota(x+0i,0)=x\cdot \vec{v}$$
for all $x \in \reals$? 
\end{problem}
\bibliographystyle{acm}
\bibliography{Submission}
\end{document}